\newtheorem{theorem}{Theorem}[section]
\newtheorem{proposition}[theorem]{Proposition}
\theoremstyle{definition}
\theoremstyle{remark}
\numberwithin{equation}{section}
\begin{document}

\title{A note on traces of singular moduli}

\author{Ja Kyung Koo}
\address{Department of Mathematical Sciences, KAIST}
\curraddr{Daejeon 373-1, Korea} \email{jkkoo@math.kaist.ac.kr}
\thanks{}

\author{Dong Hwa Shin}
\address{Department of Mathematical Sciences, KAIST}
\curraddr{Daejeon 373-1, Korea} \email{shakur01@kaist.ac.kr}
\thanks{}

\subjclass[2010]{11F30, 11F33, 11F37}

\keywords{Singular moduli, modular forms, congruences.
\newline The first named author is partially supported by Basic Science Research Program through the NRF of Korea
funded by the MEST (2010-0001654).}

\maketitle

\begin{abstract}
We will generalize Osburn's work (\cite{Osburn}) about a congruence
for traces defined in terms of Hauptmodul associated to certain
genus zero groups of higher levels.
\end{abstract}

\section {Introduction}

Let $\mathfrak{H}$ denote the complex upper half-plane and
$\mathfrak{H}^*:=\mathfrak{H}\cup\mathbb{Q}\cup\{\infty\}$. For an
integer $N\geq2$ let $\Gamma_0(N)^*$ be the group generated by
$\Gamma_0(N)$ and all Atkin-Lehner involutions $W_e$ for $e||N$.
There are only finitely many $N$ for which the modular curve
$\Gamma_0(N)^*\backslash\mathfrak{H}^*$ has genus $0$ (\cite{Ogg}).
In particular, if we let $\mathfrak{S}$ be the set of such $N$ which
are prime, then
\begin{equation*}
\mathfrak{S}=\{2,3,5,7,11,13,17,19,23,29,31,41,47,59,71\}.
\end{equation*}
For each $p\in\mathfrak{S}$ let $j_p^*(\tau)$ be the corresponding
Hauptmodul with a Fourier expansion of the form $q^{-1}+O(q)$ where
$q:=e^{2\pi i\tau}$.
\par
Let $p\in\mathfrak{S}$. For an integer $d\geq1$ such that
$-d\equiv\Box\pmod{4p}$ let $\mathcal{Q}_d$ be the set of all
positive definite integral binary quadratic forms
$Q(x,y)=[a,b,c]=ax^2+bxy+cy^2$ of discriminant $-d=b^2-4ac$. To each
$Q\in\mathcal{Q}_d$ we associate the unique root
$\alpha_Q\in\mathfrak{H}$ of $Q(x,1)$. Consider the set
\begin{equation*}
\mathcal{Q}_{d,p}:=\{[a,b,c]\in\mathcal{Q}_d~:~a\equiv0\pmod{p}\}
\end{equation*}
on which $\Gamma_0(p)^*$ acts. We then define the \textit{trace}
$t^{(p)}(d)$ by
\begin{equation*}
t^{(p)}(d):=\sum_{Q\in\mathcal{Q}_{d,p}/\Gamma_0(p)^*}\frac{1}{\omega_Q}
j_p^*(\alpha_Q)\in\mathbb{Z}
\end{equation*}
where $\omega_Q$ is the number of stabilizers of $Q$ in the
transformation group $\pm\Gamma_0(p)^*/\pm1$ (\cite{Kim}).
\par
Osburn (\cite{Osburn}) showed the following congruence:

\begin{theorem}\label{Osburntheorem}
Let $p\in\mathfrak{S}$. If $d\geq1$ is an integer such that
$-d\equiv\Box\pmod{4p}$ and $\ell\neq p$ is an odd prime which
splits in $\mathbb{Q}(\sqrt{-d})$, then
\begin{equation*}
t^{(p)}(\ell^2 d)\equiv0\pmod{\ell}.
\end{equation*}
\end{theorem}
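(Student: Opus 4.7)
My plan is to prove the congruence via Hecke operators on half-integral weight modular forms, following the strategy of Ahlgren--Ono at level $1$. The first step is to realize the generating series
\begin{equation*}
f_p(\tau) := -q^{-1} + \sum_{d \geq 1} t^{(p)}(d)\, q^d
\end{equation*}
(with the convention $t^{(p)}(d) = 0$ unless $-d \equiv \Box \pmod{4p}$) as a weakly holomorphic modular form of weight $3/2$ on $\Gamma_0(4p)$ with an appropriate quadratic character $\chi$, lying in the Kohnen plus subspace. Such an identity is known at level $1$ thanks to Zagier, and extends to the genus-zero prime levels $p \in \mathfrak{S}$ via the theta-lift framework of Bruinier--Funke and its elaborations by Bringmann--Ono and Kim.

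Next I would apply the half-integral weight Hecke operator $T_{\ell^2}$, which preserves the ambient space since $\ell \neq p$. Its action on Fourier coefficients is given by the standard Shimura formula
\begin{equation*}
(f_p \mid T_{\ell^2})(n) = a(\ell^2 n) + \chi(\ell) \left(\tfrac{-n}{\ell}\right) a(n) + \chi(\ell)^2\, \ell\, a(n/\ell^2),
\end{equation*}
where $a(n)$ denotes the $n$-th Fourier coefficient of $f_p$. Using $a(-1) = -1$ and $a(n) = 0$ for $n \leq -2$, a direct calculation shows that the principal part of $f_p \mid T_{\ell^2}$ equals $-\chi(\ell)\, q^{-1} - \chi(\ell)^2 \ell\, q^{-\ell^2}$. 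Reducing modulo $\ell$ kills the $q^{-\ell^2}$ contribution, so $f_p \mid T_{\ell^2}$ and $\chi(\ell)\, f_p$ have identical principal parts modulo $\ell$.

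The heart of the argument is to upgrade this coincidence of principal parts to a full Fourier-expansion congruence
\begin{equation*}
f_p \mid T_{\ell^2} \equiv \chi(\ell)\, f_p \pmod{\ell}.
\end{equation*}
Granted this, evaluating the Hecke formula at the index $d$ in the hypothesis and using $\left(\tfrac{-d}{\ell}\right) = 1$ from the splitting condition, we find modulo $\ell$ that $t^{(p)}(\ell^2 d) + \chi(\ell)\, t^{(p)}(d) \equiv \chi(\ell)\, t^{(p)}(d)$, yielding $t^{(p)}(\ell^2 d) \equiv 0 \pmod{\ell}$ as desired (the last term $\chi(\ell)^2 \ell\, t^{(p)}(d/\ell^2)$ is automatically divisible by $\ell$). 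The main obstacle I anticipate is precisely the mod-$\ell$ identity $f_p \mid T_{\ell^2} \equiv \chi(\ell)\, f_p \pmod{\ell}$: deducing it from the principal-part coincidence requires controlling the space of holomorphic weight-$3/2$ cusp forms in the Kohnen plus subspace on $\Gamma_0(4p)$ modulo $\ell$, so that a form with principal part $\equiv 0 \pmod{\ell}$ has its entire expansion divisible by $\ell$. For the genus-zero prime levels in $\mathfrak{S}$ this should be tractable by dimension formulas, but must be handled case by case and with care for the character $\chi$.
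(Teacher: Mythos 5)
Your route is genuinely different from the paper's: you work directly with the weight-$3/2$ generating function of the traces, which in the paper's notation is (up to sign) the form $g_{1,p}$ of Proposition \ref{Jacobi}(i), since $t^{(p)}(d)=-B(1,d)$ by Proposition \ref{Jacobi}(iv). The skeleton is sound: the character $\chi$ is trivial in the normalization (\ref{Heckeoperator}), your computation of the principal part of $f_p|T_{3/2,p}(\ell^2)$ is correct, and the final deduction from the splitting hypothesis $\big(\frac{-d}{\ell}\big)=1$ is exactly right. One small caveat: the trace generating function carries a nonzero constant term $-B(1,0)$ which you must keep (i.e.\ take $f_p=-g_{1,p}$ literally), since otherwise $f_p$ differs from a modular form by a constant.

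The gap is the step you yourself flag, and the way you propose to fill it is off target. You do not need any mod-$\ell$ control of the space of holomorphic weight-$3/2$ cusp forms, nor a case-by-case dimension count; what you need is precisely the content of Proposition \ref{Jacobi}(i): for each $D\equiv\Box\pmod{4p}$ there is a \emph{unique} $g_{D,p}\in M_{3/2}^{+\cdots+}(p)^!$ with principal part $q^{-D}$, and its coefficients are integers. Uniqueness here means that a weakly holomorphic plus-space form of weight $3/2$ is determined by its principal part — note that controlling cusp forms alone would not suffice, since a weight-$3/2$ Eisenstein series in the plus space would already destroy this. Granting it, your principal-part computation yields the exact identity
\begin{equation*}
g_{1,p}(\tau)|T_{3/2,p}(\ell^2)=g_{1,p}(\tau)+\ell\, g_{\ell^2,p}(\tau),
\end{equation*}
and the congruence $f_p|T_{3/2,p}(\ell^2)\equiv f_p\pmod{\ell}$ follows at once from the integrality of the coefficients of $g_{\ell^2,p}$; there is nothing further to check modulo $\ell$. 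The paper reaches the same identity (its equation (\ref{third}) at $D=1$) by a dual route: it works in weight $1/2$, where the $f_{d,p}$ form a basis of $M_{1/2}^{+\cdots+}(p)^!$, expands $f_{d,p}|T_{1/2,p}(\ell^2)$ exactly as in (\ref{combination}), and transports the result to weight $3/2$ via the duality $A(D,d)=-B(D,d)$ of Proposition \ref{Jacobi}(iii); this is Proposition \ref{main}, the step the authors identify as missing from Osburn's original argument. Both routes rest on the same structural input from Proposition \ref{Jacobi}; the paper's detour through weight $1/2$ additionally sets up the recursion of Proposition \ref{Jenkins} needed for the stronger congruence $t^{(p)}(\ell^{2n}d)\equiv0\pmod{\ell^n}$, which your argument as written does not reach.
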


Although this result is true, we think that his proof seems to be
unclear. Precisely speaking, let $D\geq1$ be an integer such that
$D\equiv\Box\pmod{4p}$. In $\S$3 we shall define
\begin{eqnarray*}
A_{\ell}(D,d)&:=&\textrm{the coefficient of $q^D$ in
$f_{d,p}(\tau)|T_{1/2,p}(\ell^2)$}\\
B_{\ell}(D,d)&:=&\textrm{the coefficient of $q^d$ in
$g_{D,p}(\tau)|T_{3/2,p}(\ell^2)$}
\end{eqnarray*}
where $f_{d,p}(\tau)$ and $g_{D,p}(\tau)$ are certain half integral
weight modular forms, and $T_{1/2,p}(\ell^2)$ and
$T_{3/2,p}(\ell^2)$ are Hecke operators of weight $1/2$ and $3/2$,
respectively. The key step that is not presented in Osburn's work is
the fact $A_\ell(1,d)=-B_\ell(1,d)$ which would be nontrivial at
all. In this paper we shall first give a proof of more general
statement $A_\ell(D,d)=-B_\ell(D,d)$ (Proposition \ref{main}), and
then further generalize Theorem \ref{Osburntheorem} as follows,
\begin{equation*}
t^{(p)}(\ell^{2n}d)\equiv0\pmod{\ell^n}
\end{equation*}
for all $n\geq1$ (Theorem \ref{generalize}).

\section {Preliminaries}

Let $k$ and $N\geq1$ be integers. If $f(\tau)$ is a function on
$\mathfrak{H}$ and $\gamma=\begin{pmatrix}
a&b\\c&d\end{pmatrix}\in\Gamma_0(4N)$, then we define the
\textit{slash operator} $[\gamma]_{k+1/2}$ on $f(\tau)$ by
\begin{equation*}
f(\tau)|[\gamma]_{k+1/2}:=j(\gamma,\tau)^{-2k-1}f(\gamma\tau)
\end{equation*}
where
\begin{equation*}
j(\gamma,\tau):=\bigg(\frac{c}{d}\bigg)\varepsilon_d^{-1}\sqrt{c\tau+d}\quad
\textrm{with}~\varepsilon_d:=\left\{\begin{array}{ll} 1
&\textrm{if}~d\equiv1\pmod4\\
i&\textrm{if}~d\equiv3\pmod4.\end{array}\right.
\end{equation*}
Here $(\frac{c}{d})$ is the Kronecker symbol and $\sqrt{c\tau+d}$
takes its argument on the interval $(-\pi/2,\pi/2]$.
\par
We denote by $M_{k+1/2}^{+\cdots+}(N)^!$ the infinite dimensional
vector space of weakly holomorphic modular forms of weight $k+1/2$
on $\Gamma_0(4N)$ which satisfy the Kohnen plus condition. Namely,
the space consists of the functions $f(\tau)$ on $\mathfrak{H}$ such
that
\begin{itemize}
\item[(i)] $f(\tau)$ is holomorphic on $\mathfrak{H}$ and
meromorphic at the cusps;
\item[(ii)] $f(\tau)$ is invariant under the action of
$[\gamma]_{k+1/2}$ for all $\gamma\in\Gamma_0(4N)$;
\item[(iii)] $f(\tau)$ has a Fourier expansion of the form
\begin{equation*}
\sum_{(-1)^kn\equiv\Box\pmod{4N}}a(n)q^n.
\end{equation*}
\end{itemize}
\par
Suppose that $\ell$ is a prime with $\ell\nmid N$. The action of the
\textit{Hecke operator} $T_{k+1/2,N}(\ell^2)$ on a form
\begin{equation*}
f(\tau)=\sum_{(-1)^k
n\equiv\Box\pmod{4N}}a(n)q^n\quad\textrm{in}\quad
M_{k+1/2}^{+\cdots+}(N)^!
\end{equation*}
is given by
\begin{equation}\label{Heckeoperator}
f(\tau)|T_{k+1/2,N}(\ell^2):=
\ell_k\sum_{(-1)^kn\equiv\Box\pmod{4N}}
\bigg(a(\ell^2n)+\bigg(\frac{(-1)^k n}{\ell}\bigg)\ell^{k-1}a(n)
+\ell^{2k-1}a(n/\ell^2)\bigg)q^n
\end{equation}
where
\begin{equation*}
\ell_k:=\left\{\begin{array}{ll} \ell^{1-2k}&\textrm{if $k\leq0$}\\
1&\textrm{otherwise}.
\end{array}\right.
\end{equation*}
Here $a(n/\ell^2):=0$ if $\ell^2\nmid n$. As is well-known,
$f(\tau)|T_{k+1/2,N}(\ell^2)$ belongs to
$M_{k+1/2}^{+\cdots+}(N)^!$.

\begin{proposition}\label{Jacobi}
Let $p\in\mathfrak{S}$.
\begin{itemize}
\item[(i)] For every integer $D\geq1$ such that
$D\equiv\Box\pmod{4p}$ there is a unique $g_{D,p}$ in
$M_{3/2}^{+\cdots+}(p)^!$ with the Fourier expansion
\begin{equation*}
g_{D,p}(\tau)=q^{-D}+\sum_{d\geq0,~-d\equiv\Box\pmod{4p}}B(D,d)q^d
\qquad(B(D,d)\in\mathbb{Z}).
\end{equation*}
\item[(ii)] For every integer $d\geq0$ such that $-d\equiv\Box\pmod{4p}$ there is
a unique form
\begin{equation*}
f_{d,p}(\tau)=\sum_{D\in\mathbb{Z}}A(D,d)q^D\qquad(A(D,d)\in\mathbb{Z})
\end{equation*}
in $M_{1/2}^{+\cdots+}(p)^!$ with a Fourier expansion of the form
$q^{-d}+O(q)$. They form a basis of $M_{1/2}^{+\cdots+}(p)^!$.
\item[(iii)] For every integer $d\geq0$ such that $-d\equiv\Box\pmod{4p}$ and
every integer $D\geq1$ such that $D\equiv\Box\pmod{4p}$ we have
\begin{equation*}
A(D,d)=-B(D,d).
\end{equation*}
\item[(iv)] For every integer $d\geq1$ such that
$-d\equiv\Box\pmod{4p}$ we get
\begin{equation*}
t^{(p)}(d)=-B(1,d).
\end{equation*}
\end{itemize}
\end{proposition}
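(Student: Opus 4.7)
The plan is to model the argument on Zagier's duality for traces of singular moduli, as adapted to the $\Gamma_0(p)^*$-setting developed by Kim.

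For parts (i) and (ii), I would construct $g_{D,p}$ and $f_{d,p}$ with the prescribed principal parts explicitly via Poincar\'e series in the relevant plus-spaces $M_{k+1/2}^{+\cdots+}(p)^!$. Uniqueness follows from the fact that the difference of two candidates would be a holomorphic plus-form whose expansion at $\infty$ begins $O(q)$, i.e., a cusp form. For the primes $p \in \mathfrak{S}$ these plus-space cusp spaces are trivial: the Shimura correspondence identifies the weight-$3/2$ cuspidal plus-space on $\Gamma_0(4p)$ with weight-$2$ cusp forms on $\Gamma_0(p)^*$, and these vanish because $X_0(p)^*$ has genus zero; the weight-$1/2$ case is handled analogously. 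The basis claim in (ii) then drops out by subtracting off principal parts inductively.

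For (iii) I would apply the Zagier duality trick. The product $h(\tau) := f_{d,p}(\tau)\,g_{D,p}(\tau)$ is a weakly holomorphic modular form of weight $2$ on $\Gamma_0(4p)$, so the meromorphic differential $h(\tau)\,d\tau$ on $X_0(4p)$ has zero total residue. A direct expansion shows that, at $\infty$, only the pairs $(-d,d)$ and $(D,-D)$ can combine to produce $q^0$, so the constant term there is exactly $A(D,d) + B(D,d)$; if the residues at all other cusps vanish, then (iii) follows immediately. This vanishing should be forced by the Kohnen plus condition, which constrains the behavior of each factor at the cusps of $\Gamma_0(4p)$ lying over the non-$\infty$ cusps of $\Gamma_0(p)^*$. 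For (iv) I would then realize $g_{1,p}$ as a generating function of traces: one constructs a candidate form in $M_{3/2}^{+\cdots+}(p)^!$ with principal part $q^{-1}$ whose $q^d$-coefficient for $d \geq 1$ is $-t^{(p)}(d)$, obtained from a theta-kernel pairing of the values $j_p^*(\alpha_Q)$ against a Siegel theta series (as in \cite{Kim}). By the uniqueness of (i), this candidate must equal $g_{1,p}$, so $t^{(p)}(d) = -B(1,d)$.

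The main obstacle will be the residue vanishing in (iii) at the cusps of $\Gamma_0(4p)$ other than $\infty$. In Zagier's original level-one setting this is essentially invisible, but for $p \in \mathfrak{S}$ one has several cusps of $\Gamma_0(4p)$ to track, and one must verify that the Kohnen plus condition together with the Atkin-Lehner invariance implicit in the $\Gamma_0(p)^*$-structure forces the product $h$ to be regular enough there to contribute no residues. This is precisely the subtle duality flagged by the authors as missing from Osburn's original argument.
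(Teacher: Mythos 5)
The first thing to say is that the paper does not actually prove this proposition: its ``proof'' is a citation to Eichler--Zagier (\cite{EZ}, Theorem 5.6) and to Kim's papers \cite{Kim0}, \cite{Kim}, where these facts are established in the language of Jacobi forms and Borcherds products. So your sketch is necessarily an independent route, and the question is whether it closes. It does not, precisely at the step you flag yourself. In (iii) you apply the residue theorem to $h(\tau)\,d\tau$ with $h=f_{d,p}\,g_{D,p}$ on $X_0(4p)$ and need the residues at the cusps other than $\infty$ to vanish; you propose that the Kohnen plus condition forces this. It does not: the plus condition is a support condition on the Fourier expansion at $\infty$ (equivalently, an eigenvalue condition for an operator built from $U_4$ and a Fricke-type involution), and it in no way makes $f_{d,p}$ or $g_{D,p}$ small at the remaining cusps of $\Gamma_0(4p)$; in general $h$ has genuinely nonzero constant terms there. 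The residue theorem only gives that the sum over \emph{all} cusps vanishes. The standard repair --- and the reason the paper leans on \cite{EZ} Theorem 5.6 --- is to transport everything to weak Jacobi forms of index $p$ (equivalently, vector-valued forms for the Weil representation of a rank-one lattice of discriminant $2p$), which live on the full Jacobi/metaplectic group and hence see only one cusp; there the constant term of the natural pairing is the entire residue sum and $A(D,d)=-B(D,d)$ falls out. If you insist on working on $\Gamma_0(4p)$ you must instead compute the contributions of all cusps and show the off-$\infty$ ones cancel (e.g.\ by pairing cusps under Atkin--Lehner involutions); that is a real computation, not a formal consequence of the plus condition.

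Two smaller gaps. In (i), the difference of two candidates for $g_{D,p}$ is holomorphic at $\infty$ but need not be cuspidal (its constant term may be nonzero), so uniqueness requires the vanishing of the full holomorphic plus space of weight $3/2$, not merely of its cuspidal part; this does hold, because the relevant space of weight-$2$ forms on $\Gamma_0(p)^*$ is zero ($X_0(p)^*$ has genus zero and a single cusp, so $\dim M_2(\Gamma_0(p)^*)=g+c-1=0$), but your wording only disposes of cusp forms. Similarly, in (ii) and in the basis claim, after subtracting principal parts you must justify that a plus-space form which is $O(q)$ at $\infty$ is automatically holomorphic at the other cusps before invoking Serre--Stark; again this follows from the structure of the plus space but is not automatic for a general weakly holomorphic form. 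Your outline of (iv) --- realizing the trace generating function via a theta-kernel pairing and then invoking the uniqueness of (i) --- is sound and is essentially how \cite{Kim} proceeds.
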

\begin{proof}
See \cite{EZ} Theorem 5.6, \cite{Kim0} $\S$2.2, \cite{Kim} Lemma 3.4
and Corollary 3.5.
\end{proof}

\section {Generalization of Theorem \ref{Osburntheorem}}

We first prove the following necessary proposition by adopting
Zagier's argument (\cite{Zagier} Theorem 5).

\begin{proposition}\label{main}
Let $p\in\mathfrak{S}$ and $\ell\neq p$ be a prime. For each integer
$d\geq0$ such that $-d\equiv\Box\pmod{4p}$, define integers
$A_\ell(D,d)$ and $B_\ell(D,d)$ in the following manner:
\begin{eqnarray*}
A_{\ell}(D,d)&:=&\textrm{the coefficient of $q^D$ in
$f_{d,p}(\tau)|T_{1/2,p}(\ell^2)$}~\textrm{for each integer $D$}\\
B_{\ell}(D,d)&:=&\textrm{the coefficient of $q^d$ in
$g_{D,p}(\tau)|T_{3/2,p}(\ell^2)$}~\textrm{for each integer $D\geq1$}\\
&&\textrm{such that $D\equiv\Box\pmod{4p}$}.
\end{eqnarray*}
Then we have the relation
\begin{equation*}
A_{\ell}(D,d)=-B_{\ell}(D,d)~\textrm{for every integer $D\geq1$ such
that $D\equiv\Box\pmod{4p}$}.
\end{equation*}
\end{proposition}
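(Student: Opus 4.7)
The plan is to adapt Zagier's argument (\cite{Zagier}, Theorem~5): first identify $f_{d,p}(\tau)|T_{1/2,p}(\ell^2)$ as an explicit linear combination of the basis elements $\{f_{d',p}\}$ from Proposition~\ref{Jacobi}(ii); then extract the coefficient of $q^D$, apply the duality of Proposition~\ref{Jacobi}(iii) to turn $A$'s into $-B$'s; and finally recognize the resulting expression as $-B_\ell(D,d)$ via a direct application of~\eqref{Heckeoperator} to $g_{D,p}$.

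The key step is a principal-part computation. Because $f_{d,p}=q^{-d}+O(q)$, one has $A(-d,d)=1$ while $A(m,d)=0$ for all other $m\le 0$. Substituting into~\eqref{Heckeoperator} with $k=0$ and running over $n=-m<0$ with $-m\equiv\Box\pmod{4p}$ (the only negative exponents that can occur, since the Hecke operator preserves $M_{1/2}^{+\cdots+}(p)^!$), the only non-zero contributions come from $m=\ell^2 d$ (coefficient $1$), $m=d$ (coefficient $\left(\frac{-d}{\ell}\right)$), and $m=d/\ell^2$ (coefficient $\ell$) when $\ell^2\mid d$ and $-d/\ell^2\equiv\Box\pmod{4p}$; the constant term vanishes. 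Matching these against the basis expansion provided by Proposition~\ref{Jacobi}(ii) forces
\[
f_{d,p}(\tau)|T_{1/2,p}(\ell^2)=f_{\ell^2 d,p}(\tau)+\left(\frac{-d}{\ell}\right)f_{d,p}(\tau)+\ell\cdot f_{d/\ell^2,p}(\tau),
\]
where the last summand is interpreted as zero when $d/\ell^2$ is not an admissible index.

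Reading off the coefficient of $q^D$ on both sides yields
\[
A_\ell(D,d)=A(D,\ell^2 d)+\left(\frac{-d}{\ell}\right)A(D,d)+\ell\cdot A(D,d/\ell^2).
\]
Since $D\ge 1$ and $D\equiv\Box\pmod{4p}$, and every second index that genuinely appears is admissible, Proposition~\ref{Jacobi}(iii) applies term by term and gives
\[
A_\ell(D,d)=-\Bigl[B(D,\ell^2 d)+\left(\frac{-d}{\ell}\right)B(D,d)+\ell\cdot B(D,d/\ell^2)\Bigr].
\]
The bracketed expression is precisely $B_\ell(D,d)$ by~\eqref{Heckeoperator} with $k=1$ applied to $g_{D,p}$, and hence $A_\ell(D,d)=-B_\ell(D,d)$.

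The principal-part bookkeeping is the only real obstacle; in particular, when $\ell=2$, the divisibility $\ell^2\mid d$ does not force $-d/\ell^2\equiv\Box\pmod{4p}$, but in such a degenerate case the Kohnen plus condition on the Hecke-operator output automatically kills the $q^{-d/\ell^2}$ coefficient, so the formula remains valid once we interpret $f_{d/\ell^2,p}$ and $B(D,d/\ell^2)$ as zero outside their admissible ranges.
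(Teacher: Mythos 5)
Your proposal is correct and follows essentially the same route as the paper: compute the principal part of $f_{d,p}(\tau)|T_{1/2,p}(\ell^2)$, use the uniqueness in Proposition~\ref{Jacobi}(ii) to write it as $f_{\ell^2d,p}+\left(\frac{-d}{\ell}\right)f_{d,p}+\ell f_{d/\ell^2,p}$, apply the duality of Proposition~\ref{Jacobi}(iii), and identify the result with $-B_\ell(D,d)$ via~\eqref{Heckeoperator} for $k=1$. Your extra remark on the $\ell=2$ edge case (where $\ell^2\mid d$ need not give $-d/\ell^2\equiv\Box\pmod{4p}$) is a point the paper glosses over, and your resolution of it is sound.
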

\begin{proof}
For a pair of rational numbers $a$ and $b$, let
\begin{equation*}
\delta_{a,b}:=\left\{\begin{array}{ll}1 & \textrm{if}~a=b\in\mathbb{Z}\\
0 & \textrm{otherwise}.\end{array}\right.
\end{equation*}
Let $d\geq0$ be a fixed integer such that $-d\equiv\Box\pmod{4p}$.
It follows from the defining property of $f_{d,p}(\tau)$, namely
\begin{equation*}
A(D,d)=\delta_{D,-d}\quad\textrm{if}~D\leq0
\end{equation*}
that if $D\leq0$, then
\begin{eqnarray*}
A_\ell(D,d)&=&\ell
A(\ell^2D,d)+\bigg(\frac{D}{\ell}\bigg)A(D,d)+A(D/\ell^2,d)~\textrm{by the definition (\ref{Heckeoperator})}\\
&=&\ell
\delta_{\ell^2D,-d}+\bigg(\frac{D}{\ell}\bigg)\delta_{D,-d}+\delta_{D/\ell^2,-d}\\
&=&\ell\delta_{D,-d/\ell^2}+\bigg(\frac{D}{\ell}\bigg)\delta_{D,-d}+\delta_{D,-d\ell^2}.
\end{eqnarray*}
Hence the principal part of $f_{d,p}(\tau)|T_{1/2,p}(\ell^2)$ at
infinity is
\begin{equation*}
\ell q^{-d/\ell^2}+\bigg(\frac{-d}{\ell}\bigg)q^{-d}+q^{-d\ell^2}
\end{equation*}
where the first term should be omitted unless $-d/\ell^2$ is an
integer. Therefore we achieve
\begin{equation}\label{combination}
f_{d,p}(\tau)|T_{1/2,p}(\ell^2)= \ell
f_{d/\ell^2,p}(\tau)+\bigg(\frac{-d}{\ell}\bigg)f_{d,p}(\tau)+f_{d\ell^2,p}(\tau)~\textrm{by
Proposition \ref{Jacobi}(ii)}.
\end{equation}
And, for every integer $D\geq1$ such that $D\equiv\Box\pmod{4p}$ we
derive that
\begin{eqnarray*}
A_\ell(D,d)&=&\ell
A(D,d/\ell^2)+\bigg(\frac{-d}{\ell}\bigg)A(D,d)+A(D,d\ell^2)~\textrm{by (\ref{combination})}\\
&=&-\ell
B(D,d/\ell^2)-\bigg(\frac{-d}{\ell}\bigg)B(D,d)-B(D,d\ell^2)
~\textrm{by Proposition \ref{Jacobi}(iii)}\\
&=&-B_\ell(D,d)~\textrm{by the definition (\ref{Heckeoperator})}.
\end{eqnarray*}
\end{proof}

On the other hand, we apply Jenkins' idea (\cite{Jenkins}) to
develop a formula for the coefficient $B(D, \ell^{2n}d)$.

\begin{proposition}\label{Jenkins}
Let $p\in\mathfrak{S}$ and $\ell\neq p$ be a prime. If $d\geq0$ and
$D\geq1$ are integers such that $-d\equiv\Box\pmod{4p}$ and
$D\equiv\Box\pmod{4p}$, then
\begin{eqnarray*}
B(D,\ell^{2n}d)&=&\ell^nB(\ell^{2n}D,d)+\sum_{t=0}^{n-1}
\bigg(\frac{D}{\ell}\bigg)^{n-t-1}(B(D/\ell^2,\ell^{2t}d)-\ell^{t+1}B(\ell^{2t}D,d/\ell^2))\\
&&+\sum_{t=0}^{n-1}\bigg(\frac{D}{\ell}\bigg)^{n-t-1}
\bigg(\bigg(\bigg(\frac{D}{\ell}\bigg)-\bigg(\frac{-d}{\ell}\bigg)\bigg)
\ell^{t}B(\ell^{2t}D,d)\bigg)
\end{eqnarray*}
for all $n\geq1$.
\end{proposition}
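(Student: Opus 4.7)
My plan is to argue by induction on $n$, with the base case $n = 1$ coming out of Proposition \ref{main} and the inductive step obtained by re-applying that same $n = 1$ identity. For $n = 1$, the Hecke operator formula (\ref{Heckeoperator}) expands as
\[
A_\ell(D,d) = \ell\,A(\ell^2 D, d) + \left(\frac{D}{\ell}\right) A(D,d) + A(D/\ell^2, d),
\]
\[
B_\ell(D,d) = B(D, \ell^2 d) + \left(\frac{-d}{\ell}\right) B(D,d) + \ell\,B(D, d/\ell^2).
\]
Combining $A_\ell(D,d) = -B_\ell(D,d)$ from Proposition \ref{main} with $A(\cdot,\cdot) = -B(\cdot,\cdot)$ from Proposition \ref{Jacobi}(iii) and solving for $B(D, \ell^2 d)$ yields
\[
B(D, \ell^2 d) = \ell\,B(\ell^2 D, d) + B(D/\ell^2, d) - \ell\,B(D, d/\ell^2) + \left[\left(\frac{D}{\ell}\right) - \left(\frac{-d}{\ell}\right)\right] B(D, d),
\]
which is exactly the proposition at $n = 1$.

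For the inductive step, assuming the formula at level $n$, I would apply this base identity to $B(D, \ell^{2(n+1)} d) = B(D, \ell^2(\ell^{2n} d))$. Using $(-\ell^{2n} d / \ell) = (-d/\ell)$, this gives
\[
B(D, \ell^{2(n+1)} d) = \ell\,B(\ell^2 D, \ell^{2n} d) + B(D/\ell^2, \ell^{2n} d) - \ell\,B(D, \ell^{2(n-1)} d) + \left[\left(\frac{D}{\ell}\right) - \left(\frac{-d}{\ell}\right)\right] B(D, \ell^{2n} d).
\]
I then substitute the induction hypothesis into each of $B(\ell^2 D, \ell^{2n} d)$, $B(D, \ell^{2n} d)$, and $B(D, \ell^{2(n-1)} d)$, leaving $B(D/\ell^2, \ell^{2n} d)$ untouched to serve as the new $t = n$ contribution of the first sum in the $(n+1)$-case. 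Crucially, when the hypothesis is applied to $B(\ell^2 D, \ell^{2n} d)$ the factor $(\ell^2 D / \ell)^{n-t-1}$ vanishes except at $t = n-1$, which drastically collapses that expansion; the surviving leading term $\ell^n B(\ell^{2(n+1)} D, d)$ produces $\ell^{n+1} B(\ell^{2(n+1)} D, d)$ after multiplication by the outer $\ell$.

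The hard part will be the bookkeeping of coefficients after collecting everything. The coefficient of a generic $B(\ell^{2t} D, d)$ receives contributions from three places---the $[(D/\ell) - (-d/\ell)] B(D, \ell^{2n} d)$ tail, the $-\ell B(D, \ell^{2(n-1)} d)$ tail, and the collapsed expansion of $B(\ell^2 D, \ell^{2n} d)$---and an analogous three-fold split governs the $B(D/\ell^2, \ell^{2t} d)$ and $B(\ell^{2t} D, d/\ell^2)$ coefficients. Showing that these telescope into the single factor $(D/\ell)^{n-t} \bigl((D/\ell) - (-d/\ell)\bigr) \ell^t$ (and the companion factors in the first sum) is driven by $(\ell^{2s} D / \ell) = 0$ for $s \geq 1$ together with the $\ell$-stability $(-\ell^{2j} d / \ell) = (-d/\ell)$ under squares.
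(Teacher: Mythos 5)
Your overall strategy is the same as the paper's: derive the $n=1$ recursion for $B(D,\ell^2 d)$ by combining Proposition \ref{main} with the Hecke formula (\ref{Heckeoperator}) and Proposition \ref{Jacobi}(iii), and then induct on $n$ in the manner of Jenkins. Your base case is correct and coincides exactly with the identity the paper derives (its equation (\ref{last})). The gap is in the inductive step. When you substitute $\ell^{2n}d$ for $d$ in the base identity, the coefficient of $B(D,\ell^{2n}d)$ that appears is $\big(\frac{D}{\ell}\big)-\big(\frac{-\ell^{2n}d}{\ell}\big)$, and you evaluate the second symbol as $\big(\frac{-d}{\ell}\big)$. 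That is false: the Kronecker symbol $\big(\frac{a}{\ell}\big)$ vanishes whenever $\ell\mid a$, so $\big(\frac{-\ell^{2n}d}{\ell}\big)=0$ for all $n\geq1$. You are in fact relying on precisely this vanishing elsewhere in your own argument, when you collapse the expansion of $B(\ell^2D,\ell^{2n}d)$ using $\big(\frac{\ell^{2s}D}{\ell}\big)=0$ for $s\geq1$; asserting simultaneously that $\big(\frac{-\ell^{2j}d}{\ell}\big)=\big(\frac{-d}{\ell}\big)$ is inconsistent with that.

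The correct inductive step is therefore
\begin{equation*}
B(D,\ell^{2(n+1)}d)=\ell B(\ell^2D,\ell^{2n}d)+\bigg(\frac{D}{\ell}\bigg)B(D,\ell^{2n}d)+B(D/\ell^2,\ell^{2n}d)-\ell B(D,\ell^{2(n-1)}d),
\end{equation*}
whereas your displayed version carries the spurious extra term $-\big(\frac{-d}{\ell}\big)B(D,\ell^{2n}d)$. This is not a harmless discrepancy: one can check directly at $n=1\to 2$ that the correct recursion, after substituting the induction hypothesis, reproduces the proposition exactly, so your generically nonzero extra term would yield a formula disagreeing with the statement (and, downstream, with the proof of Theorem \ref{generalize}, which uses the precise shape of this expansion). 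Once the symbol is evaluated correctly the induction closes up along the lines you describe; the remaining coefficient bookkeeping, which you only sketch, is the same computation the paper itself delegates to Jenkins' Theorem 1.1.
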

\begin{proof}
From the definition (\ref{Heckeoperator}) we have
\begin{eqnarray}
A_\ell(D,d)&=&\ell
A(\ell^2D,d)+\bigg(\frac{D}{\ell}\bigg)A(D,d)+A(D/\ell^2,d)\label{first}\\
B_\ell(D,d)&=&\ell
B(D,d/\ell^2)+\bigg(\frac{-d}{\ell}\bigg)B(D,d)+B(D,d\ell^2).\label{second}
\end{eqnarray}
Combining Proposition \ref{main} with (\ref{first}) we get
\begin{equation}
B_\ell(D,d)=\ell
B(\ell^2D,d)+\bigg(\frac{D}{\ell}\bigg)B(D,d)+B(D/\ell^2,d).\label{third}
\end{equation}
We then derive from (\ref{second}) and (\ref{third}) that
\begin{equation}
B(D,\ell^2d)=\ell
B(\ell^2D,d)+\bigg(\frac{D}{\ell}\bigg)B(D,d)+B(D/\ell^2,d) -\ell
B(D,d/\ell^2)-\bigg(\frac{-d}{\ell}\bigg)B(D,d).\label{last}
\end{equation}
The remaining part of the proof is exactly the same as that of
\cite{Jenkins} Theorem 1.1. Indeed, one can readily prove the
proposition by using induction on $n$ and applying only
(\ref{last}).
\end{proof}

Now, we are ready to prove our main theorem which would be a
generalization of Osburn's result.

\begin{theorem}\label{generalize}
With the same notations as in Theorem \ref{Osburntheorem} we have
\begin{equation*}
t^{(p)}(\ell^{2n}d)\equiv0\pmod{\ell^n}
\end{equation*}
for all $n\geq1$.
\end{theorem}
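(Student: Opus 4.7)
The plan is to reduce the theorem, via Proposition \ref{Jacobi}(iv), to the divisibility $\ell^n\mid B(1,\ell^{2n}d)$ and then apply the explicit recursion from Proposition \ref{Jenkins}. Concretely, since $t^{(p)}(\ell^{2n}d)=-B(1,\ell^{2n}d)$, the statement is equivalent to $\ell^n\mid B(1,\ell^{2n}d)$.

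Next I would specialize Proposition \ref{Jenkins} to $D=1$ (trivially $\equiv\Box\pmod{4p}$) and exploit the splitting hypothesis. Since $\ell$ splits in $\mathbb{Q}(\sqrt{-d})$, one has $\left(\frac{-d}{\ell}\right)=1$, and in particular $\ell\nmid d$, hence $\ell^2\nmid d$. Combined with the obvious $\ell^2\nmid 1$, and the convention inherited from (\ref{Heckeoperator}) that $B(D/\ell^2,\cdot)=0$ unless $\ell^2\mid D$ (and symmetrically for the second argument), every term of the form $B(1/\ell^2,\ell^{2t}d)$ and $B(\ell^{2t},d/\ell^2)$ in the recursion vanishes. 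Moreover $\left(\frac{1}{\ell}\right)=1=\left(\frac{-d}{\ell}\right)$, so the factor $\left(\frac{D}{\ell}\right)-\left(\frac{-d}{\ell}\right)$ is zero and the second double sum dies term by term.

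With these cancellations the formula of Proposition \ref{Jenkins} collapses to
\begin{equation*}
B(1,\ell^{2n}d) = \ell^n\,B(\ell^{2n},d),
\end{equation*}
and since $B(\ell^{2n},d)\in\mathbb{Z}$ this yields $\ell^n\mid B(1,\ell^{2n}d)$, completing the argument.

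There is no genuine obstacle beyond careful bookkeeping: the analytic content is already packaged inside Propositions \ref{Jacobi} and \ref{Jenkins}. The only subtle point is the observation that the splitting hypothesis on $\ell$ is exactly strong enough to annihilate every error term in the recursion simultaneously: it forces $\ell\nmid d$ (so the $d/\ell^2$ terms vanish), while $D=1$ handles the $D/\ell^2$ terms, and the coincidence $\left(\frac{1}{\ell}\right)=\left(\frac{-d}{\ell}\right)$ eliminates the remaining Legendre-symbol discrepancy, leaving a single surviving term already pre-multiplied by $\ell^n$.
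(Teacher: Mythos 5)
Your proposal is correct and follows essentially the same route as the paper: reduce via Proposition \ref{Jacobi}(iv) to $B(1,\ell^{2n}d)$, then observe that in Proposition \ref{Jenkins} with $D=1$ the terms $B(1/\ell^2,\cdot)$ and $B(\cdot,d/\ell^2)$ vanish (since $1/\ell^2$ and $d/\ell^2$ are not integers, the latter because splitting forces $\ell\nmid d$) and the factor $\left(\frac{1}{\ell}\right)-\left(\frac{-d}{\ell}\right)$ is zero, leaving $B(1,\ell^{2n}d)=\ell^nB(\ell^{2n},d)$. Your explicit remark that the splitting hypothesis forces $\ell\nmid d$ is a minor expository improvement over the paper, which states the non-integrality of $d/\ell^2$ without comment, but the argument is the same.
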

\begin{proof}
We achieve that
\begin{eqnarray*}
t^{(p)}(\ell^{2n}d)&=&-B(1,\ell^{2n}d)~\textrm{by Proposition
\ref{Jacobi}(iv)}\\
&=&-\ell^nB(\ell^{2n},d)-\sum_{t=0}^{n-1}
\bigg(\frac{1}{\ell}\bigg)^{n-t-1}(B(1/\ell^2,\ell^{2t}d)-\ell^{t+1}B(\ell^{2t},d/\ell^2))\\
&&-\sum_{t=0}^{n-1}\bigg(\frac{1}{\ell}\bigg)^{n-t-1}
\bigg(\bigg(\bigg(\frac{1}{\ell}\bigg)-\bigg(\frac{-d}{\ell}\bigg)\bigg)
\ell^{t}B(\ell^{2t},d)\bigg)~\textrm{by Proposition \ref{Jenkins}}\\
&=&-\ell^nB(\ell^{2n},d)~\textrm{by the facts that $1/\ell^2$ and
$d/\ell^2$ are not integers, and $\bigg(\frac{-d}{\ell}\bigg)=1$}\\
&\equiv&0\pmod{\ell^n}
\end{eqnarray*}
as desired.
\end{proof}

\bibliographystyle{amsplain}

\end{document}